\documentclass[11pt,reqno]{amsart}

\usepackage{amsbsy,amsfonts,amsmath,amssymb,amscd,amsthm,mathrsfs,verbatim,calc}
\usepackage{graphicx,epsfig,color}
\usepackage{enumitem}
\usepackage[a4paper,text={6in,7.5in},centering,includefoot,foot=1in]{geometry}
\usepackage[colorlinks=true,hypertexnames=false,linkcolor=blue,citecolor=green]{hyperref}

\small\normalsize
\theoremstyle{plain}
\newtheorem{mainthm}{Theorem}

\newtheorem{theorem}{Theorem}[section]

\newtheorem{proposition}[theorem]{Proposition}
\newtheorem{corollary}[theorem]{Corollary}

\newtheorem{definitions}[theorem]{Definitions}

\theoremstyle{definition}

\newtheorem{remark}[theorem]{Remark}

\numberwithin{equation}{section}
\let\oldmarginpar\marginpar
\renewcommand\marginpar[1]{\-\oldmarginpar[\raggedleft\footnotesize\textcolor{red}{#1}]{\raggedright\footnotesize\textcolor{red}{#1}}}

\newcommand{\Mf}{\mathcal{M}_f(M)}
\DeclareMathOperator{\acc}{acc}

\begin{document}

\title[Minimal Oscillation and Non-Statisticality]{Minimal Oscillation of Ces\`{a}ro Averages Implies Non-Statisticality}
\author[A. Sarizadeh]{Aliasghar Sarizadeh}
\address{Department of Mathematics, Ilam University, Ilam, Iran}
\email{ali.sarizadeh@gmail.com}
\email{a.sarizadeh@ilam.ac.ir}

\begin{abstract}

We investigate the relationship between the global convergence of Ces\`{a}ro averages and the pointwise statistical behavior of dynamical systems. 
First, we prove that if the Ces\`{a}ro averages accumulate on at least two different measures 
(a property we call the minimal oscillation property) then the system is non-statistical. 
Second, we show that a system possesses a natural measure in the strong sense if and only if it is uniquely ergodic. 
As a consequence, every minimal homeomorphism on the circle possesses a
natural measure in the strong sense which is physical and whose basin is the entire circle.

\end{abstract}

\subjclass[2020]{37A25}
\keywords{empirical measure, Ces\`{a}ro averages, natural measure, physical measure, minimality, non-statistical map, minimal oscillation. }
\maketitle

\section{Introduction: Preliminaries and Results}\label{sec:prelim}

Identifying topological conditions that guarantee statistical behavior or ergodicity remains a central challenge in dynamical systems. 
Minimality on compact metric spaces plays a foundational role in this study. 
Classical works of Oxtoby \cite{Ox52} and Furstenberg \cite{Fur81} established deep 
links between minimality, ergodicity, and recurrence phenomena. 
Nevertheless, it follows from results in \cite{F61,Ol} that the class of minimal 
homeomorphisms and the class of Lebesgue-ergodic homeomorphisms are incomparable.

Most known examples of non-statistical behavior (also called historic behavior) in the literature \cite{AA,BB23,CV,CYJ,Herman,HK0,HK,KST,T} are non-minimal. 
This motivates the following question:
\begin{quote}
Does there exist a minimal homeomorphism of a compact smooth manifold that is non-statistical with respect to Lebesgue measure?
\end{quote}

By basic topological dynamics, any minimal homeomorphism of the circle $S^1$ is topologically conjugate to an irrational rotation. Consequently, every minimal homeomorphism on $S^1$ is uniquely ergodic (see Proposition~\ref{prop:circle_minimal_UE} for a detailed proof). In particular, the set of non-statistical points is empty and such maps do not possess the minimal oscillation property of Cesàro averages.

Therefore, the circle $S^1$ cannot support a minimal non-statistical system. If the answer to the above question is affirmative, then any such minimal non-statistical homeomorphism must reside on a smooth manifold of dimension at least two.

In this note we introduce an oscillation property and prove two main results. 
The first links oscillation of Ces\`{a}ro averages to non-statisticality. 
The second characterizes strong naturality in terms of unique ergodicity. 
As an application, we show that every minimal homeomorphism on the circle possesses a unique physical measure whose basin is the entire circle.

We now recall the basic notation. Let $M$ be a compact, connected, boundaryless smooth manifold equipped with its normalized Lebesgue volume measure $m$. Let $\mathcal{M}(M)$ be the space of Borel probability measures on $M$, and let $\mathcal{M}_f(M)$ be the subset of $f$-invariant measures. Both spaces are equipped with the weak* topology.

For a point $x \in M$, the empirical measures along its orbit are defined by
\[
e_n^f(x) := \frac{1}{n} \sum_{i=0}^{n-1} \delta_{f^i(x)},
\]
where $\delta_y$ denotes the Dirac measure at $y$.

In cases where the sequence $\{e_n^f(x)\}_{n \in \mathbb{N}}$ converges in the weak* topology, we denote its limit by $e^f_\infty(x)$. Equivalently, $e^f_\infty(x)$ is characterized by the property that for every continuous function $\phi : M \to \mathbb{R}$,
\[
\lim_{n \to \infty} \frac{1}{n} \sum_{j=0}^{n-1} \phi(f^j(x)) = \int_M \phi \, d e^f_\infty(x).
\]
We define the set of \emph{statistical points} as
\[
\mathcal{SP}(f) = \{ x \in M : e_n^f(x) \text{ converges in the weak* topology} \},
\]
and let $\mathcal{NSP}(f) = M \setminus \mathcal{SP}(f)$ denote the set of non-statistical points. 
Points belonging to $\mathcal{NSP}(f)$ are said to exhibit \emph{historic behavior}: their empirical measures $\{e_n^f(x)\}$ do not converge and keep oscillating between different accumulation points in $\mathcal{M}(M)$ as $n\to\infty$.

The system $f$ is called \emph{statistical} if $m(\mathcal{NSP}(f)) = 0$. Otherwise, if $m(\mathcal{NSP}(f)) > 0$, the system is called \emph{non-statistical} (or said to exhibit \emph{historic behavior} on a set of positive Lebesgue measure).

The Ces\`{a}ro averages of Lebesgue measure are defined by
\[
\mu_n = \frac{1}{n} \sum_{i=0}^{n-1} f_*^i m.
\]
We denote by $\acc(\nu_n)$ the set of accumulation points of a sequence of measures $\nu_n$.

\begin{definitions}
The system $f$ is said to possess:
\begin{enumerate}
    \item[(i)]    
    the \emph{minimal oscillation property of Ces\`{a}ro averages} if
    \[
    \# \acc\left( \frac{1}{n} \sum_{i=0}^{n-1} f_*^i m\right) > 1;
    \]
    \item[(ii)] a \emph{natural measure in the strong sense} $\mu$ if for every probability measure $\nu\ll m$,
    \[
    \frac{1}{n}\sum_{i=0}^{n-1} f_*^i \nu  \xrightarrow{w^*} \mu \quad \text{as } n\to\infty;
    \]
     \item[(iii)] a \emph{natural measure in the weak sense} $\mu$ if 
    \[
    \frac{1}{n}\sum_{i=0}^{n-1} f_*^i m  \xrightarrow{w^*} \mu \quad \text{as } n\to\infty.
    \]
\end{enumerate}
\end{definitions}
The relationship between the global convergence of Ces\`{a}ro averages and pointwise 
statistical behavior lies at the heart of Theorem~\ref{bipolar}. In fact, the minimal oscillation 
property of Ces\`{a}ro averages provides a simple criterion for identifying non-statistical systems.

\begin{mainthm}\label{bipolar}
If $f$ possesses the minimal oscillation property of Ces\`{a}ro averages 
with respect to Lebesgue measure $m$, then $f$ is non-statistical with respect to $m$.
\end{mainthm}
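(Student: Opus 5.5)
We argue by contraposition: assuming $f$ is statistical, i.e.\ $m(\mathcal{NSP}(f))=0$, we show that the Ces\`{a}ro averages $\mu_n=\frac1n\sum_{i=0}^{n-1}f^i_*m$ converge in the weak* topology, so $\#\acc(\mu_n)\le 1$. Since $M$ is compact, $\mathcal{M}(M)$ is weak* compact, so $\acc(\mu_n)\neq\emptyset$. Thus convergence is exactly the negation of the minimal oscillation property, and the theorem follows.

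The key identity links the global averages to the empirical measures. For every continuous $\phi:M\to\mathbb{R}$,
\[
\int_M \phi\,d\mu_n=\frac1n\sum_{i=0}^{n-1}\int_M \phi\circ f^i\,dm=\int_M\Big(\frac1n\sum_{i=0}^{n-1}\phi(f^i(x))\Big)\,dm(x)=\int_M\Big(\int_M\phi\,de^f_n(x)\Big)\,dm(x).
\]
The first step is to fix a continuous $\phi$ and look at the functions $g_n(x):=\int\phi\,de_n^f(x)$. These are measurable, since each $\phi\circ f^i$ is measurable (indeed continuous, if $f$ is continuous; at worst Borel). They are uniformly bounded by $\|\phi\|_\infty$. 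For every $x\in\mathcal{SP}(f)$ they converge to $\int\phi\,de^f_\infty(x)$. Under the statistical assumption $\mathcal{SP}(f)$ has full $m$-measure, so $g_n\to g_\infty$ $m$-a.e., where $g_\infty(x)=\int\phi\,de^f_\infty(x)$ on $\mathcal{SP}(f)$.

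The second step applies the dominated convergence theorem, with dominating constant $\|\phi\|_\infty$ integrable since $m$ is a probability measure. This gives $\int\phi\,d\mu_n\to\int g_\infty\,dm=:L(\phi)$ for every $\phi\in C(M)$.

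The third step identifies the limit. Suppose $\mu,\mu'\in\acc(\mu_n)$. Then $\int\phi\,d\mu=L(\phi)=\int\phi\,d\mu'$ for every continuous $\phi$, so $\mu=\mu'$ by the Riesz representation theorem. Hence $\acc(\mu_n)$ is a singleton, and by compactness the whole sequence converges to the measure $\mu:=\int e^f_\infty(x)\,dm(x)$. This contradicts $\#\acc(\mu_n)>1$, so $m(\mathcal{NSP}(f))>0$.

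The only delicate point is measurability. We need $\mathcal{SP}(f)$ to be measurable, so that ``$m(\mathcal{NSP}(f))=0$'' makes sense, and $g_\infty$ to be measurable. For the first, take a countable dense set $\{\phi_k\}\subset C(M)$. Then $\mathcal{SP}(f)$ is the countable intersection over $k$ of the sets where the bounded sequence $g_n^{(k)}$ is Cauchy, and each of these is Borel. For the second, $g_\infty$ is an a.e.\ limit of measurable functions, hence measurable after modification on a null set, which is harmless for $m$. This is routine rather than a real obstacle; the substance of the argument is the dominated convergence step, which turns pointwise a.e.\ convergence of $e_n^f(x)$ into weak* convergence of $\mu_n$.
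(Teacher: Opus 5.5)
Your proof is correct and is essentially the paper's argument: assuming $m(\mathcal{SP}(f))=1$, you write $\int\phi\,d\mu_n$ as the $m$-integral of the Birkhoff averages of $\phi$, apply dominated convergence to get convergence of $\int\phi\,d\mu_n$ for every continuous $\phi$, and conclude $\#\acc(\mu_n)\le 1$. You add some care the paper leaves implicit, namely identifying the limit via compactness and uniqueness of accumulation points, and checking Borel measurability of $\mathcal{SP}(f)$ through a countable dense family in $C(M)$.
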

 By the contrapositive of Theorem~\ref{bipolar}, the convergence of the Cesàro averages of Lebesgue measure is a necessary condition for the system to be statistical. 

This naturally raises the question of sufficiency: Does naturality  in weak sense  guarantee that $m(\mathcal{NSP}(f)) = 0$? 
It is known that the existence of an ergodic natural measure (original definition) does not necessarily imply that it is physical,  \cite{BB03, JT}. 
Therefore, determining under which additional conditions naturality in weak sense implies statisticality remains an interesting open problem.
Theorem~\ref{natural} below provides a condition that trivially guarantees statistical behavior.

\begin{mainthm}\label{natural}
A dynamical system $f$ possesses a natural measure in the strong sense if and only if $f$ is uniquely ergodic. 
In this case, the natural measure $\mu$ is the unique $f$-invariant probability measure, $e_n^f(x) \to \mu$ for all $x \in M$ (meaning $B(\mu) = M$), and $\mu$ is ergodic and physical.
\end{mainthm}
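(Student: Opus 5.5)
The plan is to prove the two implications separately, and then read off the "in this case" clauses from the unique-ergodicity side. Throughout I take $f$ to be continuous, so that $f_*$ is weak$^*$ continuous on $\mathcal{M}(M)$.

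\emph{Unique ergodicity implies strong naturality.} Let $\mu$ be the unique element of $\Mf$ and let $\nu\in\mathcal{M}(M)$ be arbitrary, not only $\nu\ll m$. Put $\nu_n=\frac1n\sum_{i=0}^{n-1}f_*^i\nu$. Then $f_*\nu_n-\nu_n=\frac1n(f_*^n\nu-\nu)$, which has total variation at most $2/n$. By weak$^*$ continuity of $f_*$, every accumulation point of $(\nu_n)$ is therefore $f$-invariant, so it equals $\mu$. Since $\mathcal{M}(M)$ is weak$^*$ compact, $\nu_n\to\mu$, and in particular $\mu$ is a natural measure in the strong sense.

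Taking $\nu=\delta_x$ gives $e_n^f(x)\to\mu$ for every $x\in M$, so $B(\mu)=M$. Then $m(B(\mu))=1>0$, so $\mu$ is physical. Finally, $\mu$ is ergodic because it is an extreme point of $\Mf=\{\mu\}$. This settles the "if" direction and every clause of the last sentence, provided unique ergodicity is known.

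\emph{Strong naturality implies unique ergodicity.} Here I would first show that $\mu\in\Mf$, by the same telescoping argument applied to $\nu=m$. Next, let $\eta\in\Mf$ be arbitrary; the goal is $\eta=\mu$. The idea is to approximate $\eta$ by absolutely continuous measures. For instance, take $\eta^{(r)}=\eta * \rho_r$, a smoothing of $\eta$ by normalized Lebesgue measure on balls of radius $r$. Then $\eta^{(r)}\ll m$ and $\eta^{(r)}\to\eta$ as $r\to0$. By hypothesis, the Cesàro averages of $f_*^i\eta^{(r)}$ tend to $\mu$ for each fixed $r$, while for $\eta$ itself they are constantly equal to $\eta$. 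One would then like to interchange the limits $n\to\infty$ and $r\to0$.

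This interchange is the main obstacle, and I expect it to fail in general. Consider a north--south map on $S^1$ with a repelling fixed point $N$ and an attracting fixed point $S$. For every $\nu\ll m$ we have $\nu(\{N\})=0$, so the Cesàro averages of $\nu$ converge to $\delta_S$. Thus $\delta_S$ is a natural measure in the strong sense, yet $\delta_N$ is also invariant, so $f$ is not uniquely ergodic.

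So before writing the proof I would check which additional standing hypothesis the paper intends for the converse. One option is to require convergence for every $\nu\in\mathcal{M}(M)$. In that case, taking $\nu=\eta\in\Mf$ gives $\eta=\mu$ immediately. The other is an assumption such as $m\in\Mf$ with $f$ a homeomorphism. Then strong naturality forces $\mu=m$, and one must still argue that every invariant $\eta$ equals $m$, which again needs approximation of $\eta$ together with equicontinuity-type control of $f^i$. Without such a hypothesis, only the "if" direction and the consequences listed above can be proved.
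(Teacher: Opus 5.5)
Your proposal is correct. On the converse it is more accurate than the paper: the ``only if'' direction is false as stated, and your north--south map is a valid counterexample. For $x\neq N$ one has $f^n(x)\to S$, hence $e_n^f(x)\to\delta_S$. Since $\nu(\{N\})=0$ for every $\nu\ll m$, dominated convergence gives $\frac1n\sum_{i<n}f_*^i\nu\to\delta_S$, so $\delta_S$ is a natural measure in the strong sense. Yet $\delta_N\in\mathcal{M}_f(M)$ and $e_n^f(N)=\delta_N$. One can even take $f$ to be the smooth diffeomorphism $x\mapsto x+\frac{\epsilon}{2\pi}\sin 2\pi x$ of $\mathbb{R}/\mathbb{Z}$ with $0<\epsilon<1$. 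The same example also refutes the clause $B(\mu)=M$ when only strong naturality is assumed.

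The paper's proof of the converse breaks exactly at the interchange of limits you flagged. Write $A_n=\frac1n\sum_{i<n}f_*^i$. The paper introduces $S_\mu=\{\nu: A_n\nu\to\mu\}$. Its density step (absolutely continuous measures are weak$^*$ dense) is fine, but the claimed weak$^*$ closedness of $S_\mu$ is not. The argument asserts that for $k$ large and all $n_j\ge t_k$ one has $|\int\phi\,dA_{n_j}\nu_k-\int\phi\,dA_{n_j}\nu|<\epsilon/2$. Continuity of each fixed $A_n$ only gives a threshold on $k$ depending on $n_j$, while $t_k$ depends on $k$. What is really needed is equicontinuity of $\{A_n\}$ at $\nu$, which is not available.

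In your example, smooth bumps $\nu_k\to\delta_N$ all lie in $S_{\delta_S}$, but with $t_k\to\infty$, since mass concentrated ever closer to the repeller takes ever longer to leave. Since $\delta_N\notin S_{\delta_S}$, the set $S_{\delta_S}$ is not closed.

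Your two proposed repairs fare differently.
\begin{itemize}
\item Your first repair (require convergence for all $\nu\in\mathcal{M}(M)$, equivalently for all Dirac masses) does make the equivalence true. The paper's final uniqueness paragraph then applies.
\item Your second repair is not enough on its own. For a hyperbolic toral automorphism, $m$ is invariant and ergodic, so Birkhoff's theorem plus dominated convergence give $A_n(g\,m)\to m$ for every density $g$. Yet fixed points carry other invariant measures. The equicontinuity-type control you mention is genuinely necessary.
\end{itemize}

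For the ``if'' direction both routes work. The paper quotes Oxtoby's theorem to get $e_n^f(x)\to\mu$ for every $x$, then integrates against $\nu\ll m$ by dominated convergence. You instead run the Krylov--Bogolyubov argument directly on $A_n\nu$ for arbitrary $\nu$: limit points are invariant, hence equal to $\mu$. This is essentially Oxtoby's proof carried out at the level of measures. Your version is self-contained and covers all $\nu\in\mathcal{M}(M)$ at once, so $B(\mu)=M$ follows by taking $\nu=\delta_x$. It also actually justifies ergodicity and physicality, which the paper only asserts. Since Theorem~\ref{maincor} uses only this direction, the circle application survives.
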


\begin{mainthm}\label{maincor}
If $f:S^1\to S^1$ is a minimal homeomorphism, then it possesses a natural measure in the strong sense which is physical.
\end{mainthm}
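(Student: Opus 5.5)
The plan is to derive Theorem~\ref{maincor} directly from Theorem~\ref{natural}, so the only real work is to verify that a minimal homeomorphism $f:S^1\to S^1$ is uniquely ergodic. This is the content of Proposition~\ref{prop:circle_minimal_UE}, referred to in the introduction. Once unique ergodicity is established, Theorem~\ref{natural} gives the unique invariant measure $\mu$ as a natural measure in the strong sense. It also shows that $\mu$ is ergodic and physical, with basin $B(\mu)=S^1$, which is more than the statement requires.

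For unique ergodicity I would argue as follows, in case Proposition~\ref{prop:circle_minimal_UE} must be proved in place.
\begin{enumerate}
\item A minimal homeomorphism of $S^1$ has no periodic points, since a finite orbit would be a proper closed invariant set. Hence $f$ is orientation preserving: an orientation-reversing homeomorphism of the circle has fixed points.
\item With no periodic points, the Poincar\'e rotation number $\rho(f)$ is irrational.
\item By Poincar\'e's classification, $f$ is semiconjugate to the rigid rotation $R_{\rho}$ through a monotone degree-one map $h$ with $h\circ f=R_\rho\circ h$. The map $h$ collapses exactly the closures of the wandering intervals.
\item Minimality rules out wandering intervals, because otherwise the nonwandering set would be a Cantor set, a proper closed invariant subset. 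Therefore $h$ is a homeomorphism and $f$ is conjugate to $R_\rho$.
\end{enumerate}

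It remains to transfer unique ergodicity through the conjugacy. The irrational rotation $R_\rho$ is uniquely ergodic: if $\nu$ is $R_\rho$-invariant, its Fourier coefficients satisfy $\hat\nu(k)=e^{2\pi i k\rho}\hat\nu(k)$, so $\hat\nu(k)=0$ for $k\neq0$ and $\nu$ is Lebesgue measure. Pushing forward by $h^{-1}$ gives a bijection between $\mathcal{M}_f(S^1)$ and $\mathcal{M}_{R_\rho}(S^1)$. Hence $\mathcal{M}_f(S^1)=\{\mu\}$ with $\mu=h^{-1}_*\mathrm{Leb}$.

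The main obstacle is the classical step ruling out wandering intervals and obtaining a genuine conjugacy rather than a semiconjugacy. There is a shortcut that avoids the conjugacy entirely. From $\rho(f)$ irrational, it follows directly that all invariant measures share the same rotation-number pairing. Any invariant $\nu$ then satisfies $\nu(I)=|h(I)|$ for arcs $I$, which already forces uniqueness. I would use this shortcut if quoting the Denjoy-type argument is undesirable.
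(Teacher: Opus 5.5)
Your proposal is correct and follows the paper's route. You establish unique ergodicity exactly as in Proposition~\ref{prop:circle_minimal_UE}: no periodic points, hence orientation preserving with irrational rotation number, and the Poincar\'e semiconjugacy becomes a conjugacy because minimality excludes wandering intervals. You then invoke the uniquely-ergodic-implies-strongly-natural direction of Theorem~\ref{natural}, which is the only direction needed. Your Fourier argument for the rotation and the shortcut via uniqueness of the semiconjugacy are sound additions. Your formula $\mu=(h^{-1})_*\mathrm{Leb}$ is the correct one for $h\circ f=R_\rho\circ h$; the paper's $h_*m$ has the direction reversed.
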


\section{The Minimal Oscillation Property: A Non-Statistical Criterion}\label{sec:oscillation}

In this section, we prove that any system satisfying the minimal oscillation property of Ces\`{a}ro averages must be non-statistical. 
The argument relies on showing that the inherent oscillation prevents the Birkhoff averages from converging on a set of full measure.
In \cite{AA}, the authors prove a stronger version, showing that the maximal oscillation property, $\acc\left( \frac{1}{n} \sum_{i=0}^{n-1} f_*^i m\right) \supseteq \mathcal{M}_f(M)$, implies that the set of non-statistical points has full measure.

\begin{proof}[Proof of Theorem~\ref{bipolar}]
Let $\mathcal{SP} = M \setminus \mathcal{NSP}$; that is, the set of points $x$ such that the Birkhoff limit $e^f_\infty(x) = \lim_{n \to \infty} \frac{1}{n} \sum_{k=0}^{n-1} \delta_{f^k(x)}$ is well-defined. 
If $m(\mathcal{SP}) = 0$, then trivially $m(\mathcal{NSP}) = 1 > 0$, and there is nothing to prove. 

Assume instead, for the sake of contradiction, that $\mathcal{SP}$ has full Lebesgue measure, meaning $m(\mathcal{SP}) = 1$. 
In this case, the normalized restriction $m_{\scriptscriptstyle \mathcal{SP}}$ is exactly equal to $m$.

Consider the sequence of Ces\`{a}ro averages for the measure $m_{\scriptscriptstyle \mathcal{SP}}$:
\[
\mu_n = \frac{1}{n} \sum_{k=0}^{n-1} f^k_* m_{\scriptscriptstyle \mathcal{SP}}.
\]
We claim that this sequence converges to a unique limit measure in the weak-* topology. 
To see this, let $\phi : M \to \mathbb{R}$ be an arbitrary continuous function. Observe that
\[
\int \phi \, d\mu_n = \int \phi \, d\left( \frac{1}{n} \sum_{k=0}^{n-1} f^k_* m_{\scriptscriptstyle \mathcal{SP}} \right) = \int \left( \frac{1}{n} \sum_{k=0}^{n-1} \phi(f^k(x)) \right) dm_{\scriptscriptstyle \mathcal{SP}}(x).
\]
Since $m_{\scriptscriptstyle \mathcal{SP}}(\mathcal{SP}) = 1$, for $m_{\scriptscriptstyle \mathcal{SP}}$-almost every $x \in \mathcal{SP}$, the integrand $\frac{1}{n} \sum_{k=0}^{n-1} \phi(f^k(x))$ converges pointwise to $\int \phi \, d[e^f_\infty(x)]$ by the definition of the Birkhoff limit. 
Because $\phi$ is bounded on the compact manifold $M$, we may apply the Dominated Convergence Theorem to interchange the limit and the integral, yielding
\[
\lim_{n \to \infty} \int \phi \, d\mu_n = \int \left( \int \phi \, d[e^f_\infty(x)] \right) dm_{\scriptscriptstyle \mathcal{SP}}(x).
\]
Therefore, there exists a limiting measure $\mu_{\scriptscriptstyle \mathcal{SP}}$ such that $\int \phi \, d\mu_{\scriptscriptstyle \mathcal{SP}} = \lim_{n \to \infty} \int \phi \, d\mu_n$.
Because we assumed $m(\mathcal{SP}) = 1$, we have $m_{\scriptscriptstyle \mathcal{SP}} = m$. 
The calculation above therefore demonstrates that the sequence $\{\frac{1}{n} \sum_{k=0}^{n-1} f^k_* m \}$ converges to the unique measure $\mu_{\scriptscriptstyle \mathcal{SP}}$. 
This directly contradicts the hypothesis that $f$ possesses the minimal oscillation property of Ces\`{a}ro averages, which requires $\{\frac{1}{n} \sum_{k=0}^{n-1} f^k_* m\}$ to have at least two distinct accumulation points, thereby precluding convergence. 

Consequently, our assumption that $\mathcal{SP}$ has full measure must be false. 
It follows that $m(\mathcal{NSP}) > 0$, which, by definition, means that $f$ is non-statistical with respect to $m$.
\end{proof}

\begin{corollary}\label{SP}
Let $m(\mathcal{SP}) > 0$ and let $m_{\scriptscriptstyle \mathcal{SP}}$ denote the normalized restriction of the Lebesgue measure to $\mathcal{SP}$.
Then the sequence $\{ \frac{1}{n} \sum_{k=0}^{n-1} f^k_* m_{\scriptscriptstyle \mathcal{SP}}\}_n$ converges in the weak* topology.
\end{corollary}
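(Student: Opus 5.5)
The plan is to rerun the dominated convergence computation from the proof of Theorem~\ref{bipolar}, now with the normalized restriction $m_{\scriptscriptstyle \mathcal{SP}} := m(\cdot\cap\mathcal{SP})/m(\mathcal{SP})$ in place of $m$. The assumption $m(\mathcal{SP})>0$ is exactly what makes this a well-defined probability measure. The argument has three steps.

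\emph{Measurability.} First I check that $\mathcal{SP}$ is a Borel set, so that $m_{\scriptscriptstyle \mathcal{SP}}$ makes sense. Fix a countable family $\{\phi_j\}$ dense in $C(M)$. Then $x\in\mathcal{SP}$ if and only if, for every $j$, the sequence $\frac1n\sum_{k<n}\phi_j(f^k x)$ is Cauchy. Sufficiency uses density together with compactness of $\mathcal{M}(M)$. Each of these Cauchy conditions is a countable Boolean combination of sets defined by measurable functions, so $\mathcal{SP}$ is Borel. This is the only point not addressed in the proof of Theorem~\ref{bipolar}, and I expect it to be the main, though mild, obstacle.

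\emph{Pointwise convergence and dominated convergence.} For any $\phi\in C(M)$, change of variables gives
\[
\int\phi\,d\Big(\tfrac1n\sum_{k<n}f^k_*m_{\scriptscriptstyle \mathcal{SP}}\Big)=\int_{\mathcal{SP}}\tfrac1n\sum_{k<n}\phi(f^k x)\,dm_{\scriptscriptstyle \mathcal{SP}}(x).
\]
For every $x\in\mathcal{SP}$ the integrand converges to $\int\phi\,de^f_\infty(x)$. The integrands are bounded by $\|\phi\|_\infty$, so dominated convergence shows that the left-hand side converges to $L(\phi):=\int_{\mathcal{SP}}\int\phi\,de^f_\infty(x)\,dm_{\scriptscriptstyle \mathcal{SP}}(x)$. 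The integrand $x\mapsto\int\phi\,de^f_\infty(x)$ is measurable, being a pointwise limit of measurable functions.

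\emph{Identifying the limit as a measure.} The functional $L$ is linear and positive, and $L(1)=1$. By the Riesz representation theorem, $L$ is integration against a unique Borel probability measure $\mu_{\scriptscriptstyle \mathcal{SP}}$. Therefore $\frac1n\sum_{k<n}f^k_*m_{\scriptscriptstyle \mathcal{SP}}\to\mu_{\scriptscriptstyle \mathcal{SP}}$ in the weak* topology.

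Alternatively, one can avoid Riesz: by compactness of $\mathcal{M}(M)$, every subsequence has a further convergent subsequence, and all such limits agree on every $\phi$, hence coincide. Either route completes the proof.
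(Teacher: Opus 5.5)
Your proposal is correct and follows the paper's own argument: integrate the Birkhoff averages of a continuous $\phi$ against $m_{\scriptscriptstyle \mathcal{SP}}$, use their pointwise convergence on $\mathcal{SP}$, and pass to the limit by dominated convergence. Your additional steps are details the paper leaves implicit, and you handle them correctly: you check that $\mathcal{SP}$ is Borel via a countable dense family in $C(M)$, and you identify the limiting functional as a probability measure via Riesz or compactness of $\mathcal{M}(M)$.
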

\begin{proof}
The proof follows exactly the same argument as the proof of Theorem~\ref{bipolar}. 
For any continuous function $\phi$, the Birkhoff average $\frac{1}{n}\sum_{k=0}^{n-1} \phi(f^k(x))$ converges for all $x \in \mathcal{SP}$ by definition. 
Since $m_{\scriptscriptstyle \mathcal{SP}}$ is supported entirely on $\mathcal{SP}$, this pointwise convergence holds $m_{\scriptscriptstyle \mathcal{SP}}$-almost everywhere. 
Applying the Dominated Convergence Theorem (as $\phi$ is bounded on the compact manifold $M$) allows us to interchange the limit and the integral, proving that the sequence of measures $\frac{1}{n} \sum_{k=0}^{n-1} f^k_* m_{\scriptscriptstyle \mathcal{SP}}$ converges in the weak* topology.
\end{proof}

\begin{remark}\label{rem:maximal}
A map $f : M \to M$ possesses the \emph{maximal oscillation property of Ces\`{a}ro averages} with respect to $m$ if the sequence of averages 
\[
m_j=\frac{1}{j}\sum_{i=0}^{j-1}f_*^im 
\]
accumulates on the entire space of $f$-invariant measures (i.e., $\acc(m_j) \supseteq \Mf$). 
Note that maximal oscillation does not imply minimal oscillation. 
For instance, a uniquely ergodic system trivially satisfies maximal oscillation (since $\Mf$ is a singleton) but fails to exhibit minimal oscillation, as its Ces\`{a}ro averages converge to a single measure.
\end{remark}

\begin{remark}\label{lem:SP_full}
Let $f$ possess a natural measure $\mu$ in the weak sense, i.e., 
 $\frac{1}{n} \sum_{k=0}^{n-1} f_*^k m \to \mu$ as $n\to\infty$. 
Then $\mu(\mathcal{SP}) = 1$.

Indee, a standard property of weak-* limits of Ces\`{a}ro averages guarantees that $\mu$ is $f$-invariant. 
By Birkhoff's Ergodic Theorem, for $\mu$-almost every $x \in M$, the empirical measures $e_n^f(x)$ converge to some invariant measure. 
This implies that $\mu$-almost every point belongs to the statistical set $\mathcal{SP}$, giving us $\mu(\mathcal{SP}) = 1$.
\end{remark}

\section{Strong Naturality and Unique Ergodicity}\label{sec:natural}
The converse of Theorem~\ref{bipolar} does not hold. Misiurewicz \cite{Mis} constructed a continuous map 
 $f$ on the torus $\mathbb{T}^2$ that possesses a natural measure in the weak sense 
(guaranteeing the convergence of its Ces\`{a}ro averages) yet for Lebesgue almost every $x$, the empirical measures $e_n^f(x)$ exhibit maximal oscillation, accumulating on the entirety of $\mathcal{M}_f(M)$.

 Theorem~\ref{natural} establishes an equivalence between strong naturality and unique ergodicity, and 
 so for every $x\in M$, the empirical measures $e_n^f(x)$  convergent to unique limit measure.

\begin{proof}[Proof of Theorem~\ref{natural}]
We prove both directions of the equivalence. 

Firstly, assume $f$ is uniquely ergodic with unique invariant probability measure $\mu$. 
By classical results in topological dynamics (Oxtoby's theorem), unique ergodicity implies that for \emph{every} point $x \in M$, the empirical measures converge: $e_n^f(x) \xrightarrow{w^*} \mu$. 
Let $\nu$ be any probability measure absolutely continuous with respect to $m$ ($\nu \ll m$), and let $\phi : M \to \mathbb{R}$ be an arbitrary continuous function. Since $M$ is compact, $\phi$ is bounded. 
Because $\frac{1}{n}\sum_{k=0}^{n-1} \phi(f^k(x)) \to \int \phi \, d\mu$ for every $x \in M$, we can apply the Dominated Convergence Theorem to interchange the limit and the integral with respect to $\nu$:
\begin{align*}
\lim_{n \to \infty} \int \phi \, d\left( \frac{1}{n}\sum_{k=0}^{n-1} f_*^k \nu \right) 
&= \lim_{n \to \infty} \int \left( \frac{1}{n}\sum_{k=0}^{n-1} \phi(f^k(x)) \right) d\nu(x) \\[4pt]
&= \int \lim_{n \to \infty} \left( \frac{1}{n}\sum_{k=0}^{n-1} \phi(f^k(x)) \right) d\nu(x) \\[4pt]
&= \int \left( \int \phi \, d\mu \right) d\nu(x) = \int \phi \, d\mu.
\end{align*}
Since $\phi$ was arbitrary, $\frac{1}{n}\sum_{k=0}^{n-1} f_*^k \nu \xrightarrow{w^*} \mu$. Because $\nu \ll m$ was arbitrary, $\mu$ is a natural measure in the strong sense.

Converse, let $d(\cdot, \cdot)$ be a metric inducing the weak* topology on $\mathcal{M}(M)$. Define the set of measures whose Ces\`{a}ro averages converge to $\mu$:
\[
S_\mu := \left\{ \nu \in \mathcal{M}(M) : \frac{1}{n}\sum_{i=0}^{n-1} f_*^i \nu \to \mu \right\}.
\]
By hypothesis, $S_\mu$ contains all measures absolutely continuous with respect to $m$. We prove $S_\mu = \mathcal{M}(M)$ in two steps: showing $S_\mu$ is closed, and showing absolutely continuous measures are dense.

\emph{$S_\mu$ is closed.} Let $\nu_k \in S_\mu$ be a sequence converging to some $\nu \in \mathcal{M}(M)$. Suppose for contradiction that $\nu \notin S_\mu$. Then there exists a continuous function $\phi: M \to \mathbb{R}$, some $\epsilon > 0$, and a subsequence $n_j \to \infty$ such that 
\[
\left| \int \phi \, d\left( \frac{1}{n_j}\sum_{i=0}^{n_j-1} f_*^i \nu \right) - \int \phi \, d\mu \right| \ge \epsilon \quad \text{for all } j.
\]
Since $\nu_k \in S_\mu$, for each $k \in \mathbb{N}$ there exists $t_k$ such that 
 $\left| \int \phi \, d\left( \frac{1}{n}\sum_{i=0}^{n-1} f_*^i \nu_k \right) - \int \phi \, d\mu \right| < \frac{\epsilon}{2}$ for all $n \geq t_k$. 

Because the pushforward operator $f_*$ is continuous in the weak* topology, the map $\nu \mapsto \frac{1}{n}\sum_{i=0}^{n-1} f_*^i \nu$ is continuous for any fixed $n$. 
Thus, for sufficiently large $k$ and any $n_j \geq t_k$, we have 
\[
\left| \int \phi \, d\left( \frac{1}{n_j}\sum_{i=0}^{n_j-1} f_*^i \nu_k \right) - \int \phi \, d\left( \frac{1}{n_j}\sum_{i=0}^{n_j-1} f_*^i \nu \right) \right| < \frac{\epsilon}{2}.
\]
Applying the triangle inequality with the two estimates above yields a contradiction. Therefore, $\nu \in S_\mu$, and $S_\mu$ is closed.

\emph{Absolutely continuous measures are dense.} 
Because $M$ is a connected, boundaryless smooth manifold and $m$ is a volume probability, the support of $m$ is all of $M$. 
Any Borel probability measure $\nu$ on a compact Riemannian manifold can be weakly approximated by a sequence of smooth measures (see Villani \cite{Vill}). Any smooth measure on $M$ admits a continuous density with respect to $m$, meaning it is absolutely continuous with respect to $m$. 

Since $S_\mu$ is a closed set containing a dense subset (the absolutely continuous measures), it follows that $S_\mu = \mathcal{M}(M)$. 
In particular, for every point $x \in M$, the Dirac measure $\delta_x$ belongs to $S_\mu$. Observing that 
 $\frac{1}{n}\sum_{i=0}^{n-1} f_*^i \delta_x = e_n^f(x)$, we immediately obtain 
\[
e_n^f(x) \to \mu \quad \text{for all } x \in M.
\]
This implies $B(\mu) = M$. Because the empirical measures for all points converge to the same measure $\mu$, there can be no other invariant probability measures. Indeed,  assume there exists another $f$-invariant probability measure $\nu \in \mathcal{M}_f(M)$ with $\nu \neq \mu$.

Let $\phi: M \to \mathbb{R}$ be an arbitrary continuous function. By assumption,
\[
\frac{1}{n}\sum_{k=0}^{n-1} \phi(f^k(x)) \longrightarrow \int_M \phi \, d\mu \quad \text{for every } x \in M.
\]

Integrate both sides with respect to $\nu$:
\[
\int_M \left( \lim_{n\to\infty} \frac{1}{n}\sum_{k=0}^{n-1} \phi(f^k(x)) \right) d\nu(x) = \int_M \phi \, d\mu.
\]

Since $\phi$ is bounded (as $M$ is compact), the Dominated Convergence Theorem allows us to pass the limit inside:
\[
\lim_{n\to\infty} \int_M \frac{1}{n}\sum_{k=0}^{n-1} \phi(f^k(x)) \, d\nu(x) = \int_M \phi \, d\mu.
\]

The left-hand side simplifies as follows:
\[
\lim_{n\to\infty} \frac{1}{n} \sum_{k=0}^{n-1} \int_M \phi(y) \, d(f_*^k \nu)(y).
\]
Because $\nu$ is $f$-invariant, $f_*^k\nu = \nu$ for all $k$, so each term equals $\int_M \phi \, d\nu$. Hence the average is constantly $\int_M \phi \, d\nu$, and we obtain
\[
\int_M \phi \, d\nu = \int_M \phi \, d\mu.
\]

Since $\phi$ was arbitrary, $\mu = \nu$ in the weak* topology, contradicting $\nu \neq \mu$.

Therefore, $\mu$ is the unique $f$-invariant probability measure, and so $f$ is uniquely ergodic.
\end{proof}

\section{The Limit of Cesàro Averages for Minimal Homeomorphisms on the Circle is Physical}
\label{sec:circle_physical}

We now turn to circle homeomorphisms. The following proposition establishes that every minimal homeomorphism of $S^1$ is uniquely ergodic.

\begin{proposition}\label{prop:circle_minimal_UE}
Let $f: S^1 \to S^1$ be a minimal homeomorphism. Then $f$ is uniquely ergodic.
\end{proposition}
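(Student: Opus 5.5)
Since $f$ is a minimal homeomorphism of $S^1$, it has no periodic points: a finite orbit would be a proper closed invariant set. I would use this to show that $f$ is orientation preserving with irrational rotation number $\rho(f)$. If $f$ reversed orientation, a lift $F$ would satisfy that $F(x)-x$ is strictly decreasing in a suitable sense, and an intermediate value argument would produce a fixed point. This contradicts minimality. For orientation preserving $f$, Poincaré's classical theorem says that $\rho(f)=p/q$ rational forces a periodic orbit, so $\rho(f)$ is irrational.

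Next I would invoke Poincaré's classification in the minimal case. If $\rho(f)=\alpha\notin\mathbb{Q}$ and $f$ is minimal (equivalently $f$ has no wandering intervals, since every orbit is dense), then there is a homeomorphism $h:S^1\to S^1$ with $h\circ f=R_\alpha\circ h$. The standard construction sends $f^n(x_0)\mapsto x_0+n\alpha$ on the dense orbit. Density of the orbit and the order preservation of $f$ given by the rotation-number combinatorics make this map monotone and uniformly continuous, so it extends to a homeomorphism. The main obstacle is this extension step: checking that the order of $\{f^n(x_0)\}$ on the circle agrees with the order of $\{n\alpha\}$, and that density on both sides rules out gaps, which is where injectivity of the extension comes from. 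Since this is classical (Poincaré--Denjoy theory), I would cite it rather than reprove it.

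Finally, I would transfer unique ergodicity through the conjugacy. The irrational rotation $R_\alpha$ is uniquely ergodic with Lebesgue measure $m$ as its only invariant measure. The standard argument uses Fourier coefficients: invariance gives $\hat\nu(k)e^{2\pi i k\alpha}=\hat\nu(k)$, so $\hat\nu(k)=0$ for $k\neq0$. If $\nu\in\mathcal{M}_f(S^1)$, then $h_*\nu$ is $R_\alpha$-invariant, hence $h_*\nu=m$. This gives $\nu=(h^{-1})_*m$, so $f$ has exactly one invariant probability measure and is uniquely ergodic.
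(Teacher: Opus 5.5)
Your proposal is correct and follows essentially the same route as the paper. Minimality rules out periodic points, which forces orientation preservation and an irrational rotation number; Poincaré's theory (you via the dense-orbit construction, the paper via the semi-conjugacy plus absence of wandering intervals) then gives a conjugacy with $R_\alpha$, and unique ergodicity transfers through $h$. Your explicit formula $\nu=(h^{-1})_*m$ is the correct one for $h\circ f=R_\alpha\circ h$, whereas the paper's $\mu=h_*m$ pushes forward in the wrong direction.
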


\begin{proof}
Since $f$ is minimal, it has no periodic points. By a classical result (see, e.g., Katok and Hasselblatt \cite{KH}, Problem~11.2.4), every orientation-reversing homeomorphism of $S^1$ has exactly two fixed points. Therefore, minimality forces $f$ to be orientation-preserving.

As $f$ is an orientation-preserving circle homeomorphism, the Poincar\'e rotation number $\rho(f)$ is well-defined. Moreover, $\rho(f)$ is rational if and only if $f$ has a periodic point. Since $f$ has no periodic points, $\rho(f) = \alpha$ is irrational.
By the classical theory of circle homeomorphisms (Poincaré's theorem), there exists a continuous, non-decreasing, surjective map 
 $h: S^1 \to S^1$ such that
\[
h \circ f = R_\alpha \circ h,
\]
where $R_\alpha$ denotes rotation by angle $\alpha$.

We claim that minimality of $f$ implies that this semi-conjugacy is actually a topological conjugacy (i.e., $h$ is a homeomorphism). Suppose, for contradiction, that there exists a wandering interval $J \subset S^1$, that is, the forward iterates $\{f^n(J)\}_{n \geq 0}$ are pairwise disjoint. Since the total length of $S^1$ is $1$, we have $|f^n(J)| \to 0$ as $n \to \infty$. For any $x \in J$, the $\omega$-limit set satisfies
\[
\omega(x) \subseteq \bigcap_{N \geq 0} \overline{\bigcup_{n \geq N} f^n(J)}.
\]
For sufficiently large $N$, the set on the right is a proper closed subset of $S^1$. 
Thus $\omega(x) \neq S^1$, contradicting minimality. 
Therefore, $f$ admits no wandering intervals, so $h$ is injective and hence a homeomorphism. It follows that $f$ is topologically conjugate to $R_\alpha$.

Since the irrational rotation $R_\alpha$ is uniquely ergodic (with unique invariant probability measure equal to normalized Lebesgue measure $m$), and unique ergodicity is preserved under topological conjugacy, we conclude that $f$ is uniquely ergodic, with unique invariant probability measure $\mu = h_* m$.
\end{proof}

\begin{proof}[Proof of Theorem~\ref{maincor}]
Let $f: S^1 \to S^1$ be a minimal homeomorphism. By Proposition~\ref{prop:circle_minimal_UE}, $f$ is uniquely ergodic. By Theorem~\ref{natural}, $f$ therefore possesses a natural measure $\mu$ in the strong sense, and $\mu$ is physical with $B(\mu) = S^1$.
\end{proof}
\subsection*{acknowledgments}
The author would like to thank Abbas Fakhari for pointing out that no homeomorphism of the circle can possess the minimal oscillation property of Cesàro averages. This observation helped improve the presentation of the results concerning circle homeomorphisms.

\end{document}